\begin{document}

\title{On Fidel--Vakarelov construction for Monadic G\"odel algebras
}


\author{Mar\'ia Valentina Alonso and Gustavo Pelaitay}


\institute{Mar\'ia Valentina Alonso and Gustavo Pelaitay  \at
              CONICET and Instituto de Ciencias B\'asicas \\
              Universidad Nacional de San Juan \\
              5400 San Juan Argentina \\
              \email{gpelaitay@gmail.com}
}


\maketitle

\begin{abstract} A significant correlation between Nelson algebras and Heyting algebras has been explored by several scholars, including Cignoli, Fidel, Vakarelov, and Sendlewski. This connection is integral to the concept of twist structures, whose origins can be traced back to the work of Kalman. In this paper, we obtain an expansion of the Fidel-Vakarelov construction, applying it to monadic Gödel algebras (or monadic prelinear Heyting algebras). This extension leads to the emergence of a new variety, which we aptly term monadic prelinear Nelson algebras.

\keywords{Heyting algebras; Nelson algebras, monadic Heyting algebras}
\subclass{06D20 \and 03G25}
\end{abstract}

\section{Introduction}
\label{intro}

Monadic Boolean algebras, as introduced by Halmos \cite{Halmos}, are Boolean algebras equipped with a closure operator denoted by $\exists$. This operator maps elements to a subalgebra within the Boolean algebra, abstracting the algebraic properties of the standard existential quantifier \emph{for some}. The term \emph{monadic} arises from its association with predicate logics used in languages featuring unary predicates and a single quantifier.

Extensive studies on monadic Boolean algebras have been conducted by Henkin and Nemeti \cite{Henkin,Nemeti}. Building on this foundation, the concept of monadic Heyting algebras was introduced as an algebraic model for the one-variable fragment of intuitionistic predicate logic \cite{Bezhanisvili,K1,K2,AM1,AM2,AM3}. Furthermore, monadic MV-algebras, which serve as the algebraic counterpart of monadic Łukasiewicz logic, have been introduced and thoroughly investigated \cite{DiNola19,DiNola04,R59}.

Subsequent developments led to the exploration of monadic basic algebras, monadic De Morgan algebras, monadic $LM_{n}^{m}$-algebras and monadic $k \times j$-rough Heyting algebras \cite{AP,Chajda08,Chajda09,Gallardo}.

Nelson algebras, also referred to as N-lattices and quasi-pseudo boolean algebras, were initially defined by Rasiowa \cite{R58}. They serve as the algebraic foundation for the intuitionistic propositional calculus featuring strong negation, as introduced by Nelson \cite{Nelson}. The connection between Nelson algebras and Heyting algebras has been extensively explored by various researchers, including Cignoli \cite{Cignoli}, Fidel \cite{Fidel1}, Vakarelov \cite{Vakarelov}, and Sendlewski \cite{Sendlewski}, among others. This association is a fundamental aspect of what is now recognized as twist structures \cite{Chajda22,O,FF,K98,R14}, with origins tracing back to \cite{Kalman}.

The definition of the functor from the category of Kleene algebras to the category of bounded distributive lattices given by Cignoli \cite{Cignoli} is based on Priestley
duality, and the interpolation property for Kleene algebras considered by Cignoli in establishing the equivalence is stated in topological terms. On the other hand, Sagastume proved in an unpublished manuscript \cite{Sagastume} that in centered Kleene
algebras the interpolation property is equivalent to an algebraic condition called (CK), that we will state later on. Moreover, she presented an equivalence between
the category of bounded distributive lattices and the category of centered Kleene algebras that satisfy (CK), but using a different (purely algebraic) construction to
that given by Cignoli in \cite{Cignoli}. In what follows we describe this equivalence whose details can be found in \cite{Castiglioni}.

Recall that a Kleene algebra is a De Morgan algebra denoted as $\langle T, \vee, \wedge, 0, 1 \rangle$ that satisfies the inequality $x \wedge \sim x \leq y \vee \sim y$. A Kleene algebra is termed centered if it possesses a center; that is, if there exists an element $c$ in $T$ such that $c = \sim c$. This element is necessarily unique.

 We write {\bf BDL} for the category of bounded distributive lattices and {\bf KAc} for the category
of centered Kleene algebras. In both cases the morphisms are the corresponding algebra homomorphisms. It is interesting to note that if $T$ and $U$ are centered Kleene algebras and $f:T\longrightarrow U$ is a morphism of Kleene algebras then $f$ preserves necessarily the center, i.e., $f(c)=c$.

The functor K from the category {\bf BDL} to the category {\bf KAc} is defined as follows.

For an object $A\in {\bf BDL}$ we let
$$K(A):=\{(a,b)\in A\times A: a\wedge b=0\}.$$
This set is endowed with the operations and the distinguished elements defined by:

\begin{align*} 
(a,b)\vee (d,e) &:=  (a\vee d,b\wedge e) \\ 
(a,b)\wedge (d,e) &:=  (a\wedge d,b\vee e) \\
\sim (a,b) &:=  (b,a) \\
0 &:=  (0,1) \\
 1 &:=  (1,0) \\
c &:=  (0,0)
\end{align*}

We have that $\langle K(A),\wedge,\vee,\sim,c,0,1\rangle\in {\bf KAc}.$ 

For a morphism $f:H\longrightarrow A\in {\bf BDL},$ the map $K(f):K(H)\longrightarrow K(A)$ defined by 

$$K(f)(a,b):=(f(a),f(b))$$

is a morphism in {\bf KAc}. Hence, K is a funtor from {\bf BDL} to {\bf KAc}.

Let $\langle T,\vee,\wedge,\sim,0,1\rangle\in {\bf KAc}$. The set 

$$C(T):=\{x\in T: x\geq c\}$$

is the universe of a subalgebra of $\langle T,\vee,\wedge, c,1\rangle$ and $\langle C(T),\vee,\wedge,c,1\rangle\in {\bf BDL}$. Moreover, if $g:T\longrightarrow U$ is a morphism in {\bf KAc}, then the map $C(g):C(T)\longrightarrow C(U),$ given by $C(g)(x)=g(x),$ is a morphism in {\bf BDL}. Thus, $C$ is a funtor from {\bf KAc} to {\bf BDL}.

Let $A\in {\bf BDL}$. The map $\alpha: A\longrightarrow C(K(A))$ given by $\alpha(a)=(a,0)$ is an isomorphism in {\bf BDL}. If $T\in {\bf KAc},$ then the map $\beta:T\longrightarrow K(C(T))$ given by $\beta(x)=(x\vee c,\sim x\vee c)$ is injective and a morphism in {\bf KAc}. It is not difficult to show that the functor $K:{\bf BDL}\longrightarrow {\bf KAc}$ has as left adjoint the functor $C:{\bf KAc}\longrightarrow {\bf BDL}$ with unit $\beta$ and counit $\alpha^{-1}$.

We are interested though in an equivalence between {\bf BDL} and the full subcategory of {\bf  KAc} whose objects satisfy the condition (CK) we proceed to state.

Let $T\in {\bf KAc}$. We consider the algebraic condition:

\begin{itemize}
\item [(CK)]$(\forall x,y\geq c)$ $(x\wedge y=c\Longrightarrow (\exists z)(z\vee c=x\, \&\, \sim z\vee c=y)).$
\end{itemize}

This condition characterizes the surjectivity of $\beta$ as demonstrated in \cite{Sagastume}. The condition (CK) is not necessarily verified in every centered Kleene algebra (see \cite{M63}).

\vspace{2mm}

We write ${\bf KA}^{\bf CK}_{c}$ for the full subcategory of {\bf KAc} whose objects satisfy {(CK)}. The functor K can then be seen as a functor from {\bf BDL} to ${\bf KA}^{\bf CK}_{c}$. The next theorem was
proved by Sagastume in \cite{Sagastume}.

\begin{theorem} The functors K and C establish a categorical equivalence between {\bf BDL}
and ${\bf KA}^{\bf CK}_{c}$ with natural isomorphisms $\alpha$ and $\beta$.
\end{theorem}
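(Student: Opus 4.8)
The plan is to establish the equivalence directly by exhibiting $\alpha$ and $\beta$ as the two required natural isomorphisms, $\alpha\colon \mathrm{Id}_{\bf BDL}\Rightarrow C\circ K$ and $\beta\colon \mathrm{Id}_{{\bf KA}^{\bf CK}_{c}}\Rightarrow K\circ C$. Almost everything needed is already in place from the adjunction $C\dashv K$: each component $\alpha_{A}$ is an isomorphism in {\bf BDL}, each $\beta_{T}$ is an injective morphism in {\bf KAc}, and the naturality of both families is automatic, since $\beta$ is the unit and $\alpha^{-1}$ the counit of that adjunction (a unit and counit are natural by construction, and $\alpha$, being the componentwise inverse of the natural isomorphism $\alpha^{-1}$, is itself natural). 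Hence the proof reduces to two points: first, that $K$ really does take values in the subcategory ${\bf KA}^{\bf CK}_{c}$; and second, that each $\beta_{T}$ is an isomorphism once $T$ satisfies (CK).

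For the first point I would check (CK) in $K(A)$ directly. Here the center is $c=(0,0)$, and from the description of the order one sees that the elements above $c$ are exactly the pairs of the form $(a,0)$. Given two such elements $x=(a,0)$ and $y=(d,0)$ with $x\wedge y=c$, the meet equals $(a\wedge d,0)$, so the hypothesis forces $a\wedge d=0$; the pair $z:=(a,d)$ then belongs to $K(A)$ and satisfies $z\vee c=(a,0)=x$ together with $\sim z\vee c=(d,0)=y$. Thus (CK) holds in every $K(A)$, so that $K$ may be viewed as a functor ${\bf BDL}\to{\bf KA}^{\bf CK}_{c}$.

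For the second point, recall that $\beta_{T}$ is injective by construction, and observe that (CK) is exactly the surjectivity condition. Indeed, an arbitrary element of $K(C(T))$ is a pair $(u,v)$ with $u,v\geq c$ and $u\wedge v=c$ (the disjointness being taken relative to the bottom $c$ of the lattice $C(T)$), and (CK) provides $z\in T$ with $z\vee c=u$ and $\sim z\vee c=v$, i.e.\ $\beta_{T}(z)=(u,v)$. Hence, for $T\in{\bf KA}^{\bf CK}_{c}$, the map $\beta_{T}$ is a bijective homomorphism of centered Kleene algebras; since ${\bf KAc}$ is a variety, the inverse map is again a homomorphism, so $\beta_{T}$ is an isomorphism.

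Combining the two points, $\alpha$ and $\beta$ are natural isomorphisms between the relevant composites of $K$ and $C$ and the identity functors, which is precisely the statement of the equivalence. I expect the main obstacle to be the bookkeeping around (CK): one must read it correctly as the surjectivity of $\beta$, being careful that in forming $K(C(T))$ the disjointness is measured against the bottom $c$ of $C(T)$ rather than the $0$ of $T$, so that the pairs one needs to hit are exactly those with $u\wedge v=c$. Once this matching is made, surjectivity is immediate, and the remaining ingredient, that a bijective homomorphism in a variety is an isomorphism, is routine.
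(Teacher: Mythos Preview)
Your proposal is correct. Note, however, that the paper does not actually give its own proof of this theorem: it is stated as a result of Sagastume and simply cited. What the paper does provide, in the paragraphs preceding the statement, are exactly the ingredients you use---that $\alpha$ is an isomorphism in {\bf BDL}, that $\beta$ is an injective morphism in {\bf KAc}, that $C\dashv K$ with unit $\beta$ and counit $\alpha^{-1}$, and that (CK) characterizes the surjectivity of $\beta$---so your argument is precisely the assembly of those pieces into a proof. Your direct verification that $K(A)$ satisfies (CK) and your explicit reading of (CK) as surjectivity of $\beta_{T}$ (with the bottom of $C(T)$ being $c$) are the only details not spelled out in the paper, and both are handled correctly.
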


The previously described categorical equivalence can be restricted to the case of Heyting algebras and centered Nelson algebras. As we explain in what follows, this restriction allows for a more focused and insightful analysis of the relationship between these two types of algebras.

Recall that Nelson algebras (refer to \cite{M}) are algebraic structures ${\bf T}=\langle T,\vee,\wedge,\to,\sim,0,1\rangle$ that satisfy the conditions:

\begin{enumerate}
    \item [(N1)] $\langle T,\vee,\wedge,\sim,0,1\rangle$ is a Kleene algebra (see \cite{Balbes}),
    \item [(N2)] $x\to x=1,$
    \item [(N3)] $x\to (x\to z)=(x\wedge y)\to z,$
    \item [(N4)] $x\wedge (x\to y)=x\wedge (\sim x\vee y).$
\end{enumerate}

We define an algebra ${\bf T}=\langle T,\vee,\wedge,\to,\sim,0,1 \rangle$ as a \emph{centered Nelson algebra} if the reduct $\langle T,\vee,\wedge,\sim,0,1\rangle$ forms a centered Kleene algebra. Additionally, prelineal Nelson algebras, as described in \cite{Monteiro78}, constitute a subvariety of Nelson algebras characterized by the prelinearity equation $(x\to y)\vee (y\to x)=1$.

Furthermore, in \cite{M63}, Monteiro demonstrated that if $\langle  T,\vee,\wedge,\to,\sim,0,1\rangle$ constitute a Nelson algebra, then the following property is verified:

\begin{equation}\label{RN}
x\wedge z\leq \sim x\vee y \Longleftrightarrow z\leq x\to y.    
\end{equation}

We denote by {\bf HA} the category of Heyting algebras and by {\bf NA}$_{c}$ for the category of centered Nelson algebras.

Fidel \cite{Fidel1} and  Vakarelov \cite{Vakarelov} proved independently that if $A \in {\bf HA}$, then the Kleene algebra $K(A)$ is a centered Nelson algebra, in which the weak implication is defined for pairs $(a, b)$
and $(d, e)$ in $K(A)$ as follows:

\begin{equation}\label{implication}
(a,b)\to (d,e):=(a\Rightarrow d,  a\wedge e).
\end{equation}

The following result appears in \cite[Theorem 3.14]{Cignoli}.

\begin{theorem} The functors K and C establish a categorical equivalence between {\bf HA}
and {\bf NA}$_c$ with natural isomorphisms $\alpha$ and $\beta$.
\end{theorem}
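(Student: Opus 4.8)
The plan is to restrict the equivalence between \textbf{BDL} and ${\bf KA}^{\bf CK}_{c}$ of the preceding theorem to the subcategories \textbf{HA} and \textbf{NA}$_{c}$. Concretely, I would first check that $K$ maps \textbf{HA} into \textbf{NA}$_{c}$ and that $C$ maps \textbf{NA}$_{c}$ into \textbf{HA}, and then verify that the natural isomorphisms $\alpha$ and $\beta$ already available at the level of \textbf{BDL}/${\bf KA}_{c}$ are in fact morphisms in \textbf{HA} and \textbf{NA}$_{c}$ respectively. That $K$ sends a Heyting algebra $A$ to a centered Nelson algebra is exactly the Fidel--Vakarelov result, with weak implication given by \eqref{implication}; for a Heyting morphism $f\colon A\to B$ one checks directly from $K(f)(a,b)=(f(a),f(b))$ together with \eqref{implication} that $K(f)$ preserves $\to$, using that $f$ preserves $\Rightarrow$ and $\wedge$.

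For the functor $C$ I would show that, for a centered Nelson algebra $T$, the lattice $C(T)=\{x\in T: x\ge c\}$ becomes a Heyting algebra when equipped with the restriction of the weak implication $\to$ of $T$. The key observation is that for $x,y\ge c$ one has $\sim x\le\, \sim c=c\le y$, so $\sim x\vee y=y$, while $x\wedge c=c\le y$; feeding these simplifications into \eqref{RN} yields both that $c\le x\to y$ (so $\to$ restricts to $C(T)$) and the residuation equivalence $x\wedge z\le y\iff z\le x\to y$. Hence $\to$ is the relative pseudocomplement on $C(T)$, and $\langle C(T),\vee,\wedge,\to,c,1\rangle\in\textbf{HA}$. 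Since $C(g)$ is the restriction of $g$ and a Nelson morphism preserves $\to$, $C(g)$ preserves $\Rightarrow$, so $C$ lands in \textbf{HA} on morphisms as well.

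It then remains to upgrade $\alpha$ and $\beta$. The map $\alpha(a)=(a,0)$ is already a bounded lattice isomorphism onto $C(K(A))$, hence automatically preserves the (order-determined) Heyting implication; alternatively $\alpha(a)\to\alpha(b)=(a\Rightarrow b,\,a\wedge 0)=\alpha(a\Rightarrow b)$ by \eqref{implication}. A computation with \eqref{implication} and the Nelson identities \textup{(N1)}--\textup{(N4)} shows that $\beta(x)=(x\vee c,\sim x\vee c)$ preserves $\to$, so $\beta$ is a morphism in \textbf{NA}$_{c}$.

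The main obstacle is the surjectivity of $\beta$, which by the \textbf{BDL}/${\bf KA}^{\bf CK}_{c}$ equivalence amounts to the condition (CK) holding in the Kleene reduct of \emph{every} centered Nelson algebra; this is the one place where the full Nelson structure is genuinely required, since the lattice operations and $\sim$ alone cannot separate the two coordinates. Given $p,q\ge c$ with $p\wedge q=c$, I would exhibit the witness $z:=p\wedge(q\to 0)$, where $0$ is the least element of $T$, and verify from \textup{(N1)}--\textup{(N4)} and \eqref{RN} that $z\vee c=p$ and $\sim z\vee c=q$, so that $\beta(z)=(p,q)$; in the representation $T\cong K(C(T))$ this is transparent, since $q\to 0$ has negative coordinate $q$ and meeting with $p$ trims the positive coordinate to $p$ precisely because $p\wedge q=c$. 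Once (CK) is established, $\beta$ is a bijective Nelson morphism, hence an \textbf{NA}$_{c}$-isomorphism (the inverse of a bijective homomorphism in a variety being again a homomorphism), and the naturality of $\alpha$ and $\beta$ is inherited from the underlying \textbf{BDL}/${\bf KA}^{\bf CK}_{c}$ equivalence. This yields the desired categorical equivalence between \textbf{HA} and \textbf{NA}$_{c}$.
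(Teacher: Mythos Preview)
The paper does not give its own proof of this theorem; it records it as \cite[Theorem~3.14]{Cignoli}, and a later remark indicates the route taken there: Cignoli shows (via Priestley duality) that every Nelson algebra satisfies the topological interpolation property, which Sagastume identified with~(CK), whence $\beta$ is bijective. Your plan is a genuinely different, purely algebraic approach: restrict the $\mathbf{BDL}/\mathbf{KA}^{\mathbf{CK}}_{c}$ equivalence to $\mathbf{HA}$ and $\mathbf{NA}_{c}$ and verify~(CK) directly via the explicit witness $z=p\wedge(q\to 0)$. The overall strategy and the choice of witness are correct; what your version buys is that it avoids the detour through Priestley spaces.

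One caution on the (CK) step. Your justification that the witness works because it is ``transparent in the representation $T\cong K(C(T))$'' is circular as stated: that isomorphism is precisely what surjectivity of $\beta$ would give you. The repair is already present in your outline, but the order matters. You must first establish, purely from (N1)--(N4) and \eqref{RN}, that $\beta$ preserves~$\to$; concretely, the two identities $(x\to y)\vee c=(x\vee c)\to(y\vee c)$ and $\sim(x\to y)\vee c=(x\vee c)\wedge(\sim y\vee c)$. Once those hold, $\beta$ is an \emph{injective} Nelson homomorphism, and only then may equations between Nelson terms of $T$ be legitimately checked after applying $\beta$, i.e., inside $K(C(T))$. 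With that logical order respected your argument goes through, but this is where the real content lies and it should not be left as a one-line ``computation''.
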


\section{Preliminaries}

In this section, we summarize some definitions and results about monadic G\"odel, which will be used in the following sections.

\begin{definition} Heyting algebras are algebras $\langle A,\vee,\wedge,\Rightarrow,0,1\rangle$ that satisfy the conditions:

\begin{itemize}
\item [(h1)] $\langle A,\vee,\wedge,0,1\rangle$ is a bounded lattice.
\item [(h2)] $x\wedge (x\Rightarrow y)=x\wedge y$.
\item [(h3)] $x\wedge (y\Rightarrow z)=x\wedge [(x\wedge y)\Rightarrow (x\wedge z)].$
\item [(h4)] $(x\wedge y)\Rightarrow x=1$.
\end{itemize}

In what follows, we will denote the Heyting algebra $\langle A,\vee,\wedge,\Rightarrow,0,1\rangle$ as ${\bf A}.$

\end{definition}

\begin{definition}{\rm (\cite{Bezhanisvili,AM2})}
A monadic Heyting algebra is a structure $({\bf A}, \forall, \exists)$, where ${\bf A}$ is a Heyting algebra and $\forall$ and $\exists$ are unary operations verifying the following identities:

\begin{flushleft} 
 \begin{minipage}[t]{0.5\textwidth}
 \begin{itemize}
 \item [(m1)] $\forall x\leq x,$
     \item[(m2)] $\forall(x\wedge y)=\forall x\wedge\forall y,$
     \item[(m3)] $\forall1=1,$
     \item[(m4)] $\forall\exists x=\exists x,$
     \item[(m5)] $\forall (x\Rightarrow y)\leq \exists x\Rightarrow\exists y$.
\end{itemize}
 \end{minipage}\hfill\begin{minipage}
 [t]{0.5\textwidth}
 $x\leq \exists x$,\newline 
 $\exists(x\vee y)=\exists x\vee\exists y$,\newline
 $\exists 0=0$,\newline
 $\exists\forall x= \forall x$,
\end{minipage}
\end{flushleft}
\end{definition}

G\"odel algebras are prelinear Heyting algebras, that is, they constitute the variety generated
by totally ordered Heyting algebras. Concretely, G\"odel algebras are the subvariety of Heyting algebras determined by the prelinearity equation $(x \Rightarrow y) \vee (y \Rightarrow x) = 1$. More precisely, monadic G\"odel algebras coincide with monadic prelinear Heyting algebras that satisfy the equation 

\begin{equation}\label{G}
\forall (\exists x\vee y)=\exists x\vee \forall y.  
\end{equation}

We conclude this section by summarizing the fundamental properties of monadic Gödel algebras; all the proofs can be found in \cite{CC} within the broader context of monadic BL-algebras.

The next lemma collects some of the basic properties that hold true in any monadic G\"odel algebra.    

\begin{lemma} Let $({\bf A},\forall,\exists)$ be a monadic G\"odel algebra. Then,

\begin{enumerate}
    \item $\exists (A)=\forall (A);$
    \item $\exists (A)$ is a subalgebra of  {\bf A};
    \item $\exists a=\min \{b\in \exists(A): b\geq a\}$ and $\forall a=\max \{b\in \exists (A): b\leq a\}$ for every $a\in A;$
    \item the lattices of congruences of $({\bf A},\forall,\exists)$ and $\exists (A)$ are isomorphic;
    \item $({\bf A},\forall,\exists)$ is finitely subdirectly irreducible if and only if $\exists(A)$ is totally ordered;
    \item $({\bf A},\forall,\exists)$ is subdirectly irreducible if and only if $\exists(A)$ is totally ordered and there exists $u\in \exists(A)\setminus\{1\}$ such that $a\leq u$ for all $a\in \exists(A)\setminus \{1\}.$
\end{enumerate}

\end{lemma}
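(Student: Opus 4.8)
The plan is to derive items (1)--(3) directly from the behaviour of the two quantifiers, and then to reduce the structural items (4)--(6) to known facts about G\"odel algebras through a congruence correspondence. First I would record two auxiliary facts. Monotonicity of $\forall$ and $\exists$ is immediate: from (m2), $x\le y$ gives $\forall x=\forall(x\wedge y)=\forall x\wedge\forall y\le\forall y$, and dually for $\exists$ using $\exists(x\vee y)=\exists x\vee\exists y$. Idempotency then follows by applying $\exists$ to (m4): $\exists\exists x=\exists\forall\exists x=\forall\exists x=\exists x$, where I used $\exists\forall y=\forall y$; symmetrically $\forall\forall x=\forall x$. Hence $\exists(A)$ and $\forall(A)$ are exactly the fixpoint sets of $\exists$ and of $\forall$. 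For (1), axiom (m4) shows every $\exists x$ is a fixpoint of $\forall$, so $\exists(A)\subseteq\forall(A)$, and $\exists\forall x=\forall x$ gives the reverse inclusion. For (3), since $a\le\exists a\in\exists(A)$ and any $b\in\exists(A)$ with $b\ge a$ satisfies $\exists a\le\exists b=b$, the element $\exists a$ is the least member of $\exists(A)$ above $a$; the statement for $\forall$ is dual, using (m1).

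For (2) I would check closure under each operation. Membership of $0$ and $1$ comes from $\exists 0=0$ and $\forall 1=1$ together with (1). Closure under $\vee$ is immediate from $\exists(a\vee b)=\exists a\vee\exists b$ for fixpoints $a,b$, and closure under $\wedge$ is the dual statement via (m2) and $\exists(A)=\forall(A)$. The crux is closure under $\Rightarrow$: for $a,b\in\exists(A)$ I must show that $a\Rightarrow b$, computed in ${\bf A}$, is again a fixpoint. Since $a\Rightarrow b\le\exists(a\Rightarrow b)$ always holds, it suffices to prove $\exists(a\Rightarrow b)\wedge a\le b$. My intended route is the Frobenius-type identity $\exists(\exists x\wedge y)=\exists x\wedge\exists y$: granting it, $\exists(a\Rightarrow b)\wedge a=\exists(a\Rightarrow b)\wedge\exists a=\exists((a\Rightarrow b)\wedge a)=\exists(a\wedge b)=a\wedge b\le b$, where I used (h2) and that $a\wedge b$ is already closed. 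I expect the main obstacle to lie in this Frobenius identity itself: its easy inequality is routine, but the reverse inclusion $\exists x\wedge\exists y\le\exists(\exists x\wedge y)$ is entangled with the very $\Rightarrow$-closure one is after, so it is here that the interaction axiom (m5), and in the G\"odel setting the prelinearity equation together with \eqref{G}, must genuinely be used.

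For (4) I would phrase the correspondence through filters. Congruences of a G\"odel (Heyting) algebra are in lattice isomorphism with its filters, and a congruence of $({\bf A},\forall,\exists)$ corresponds precisely to a filter $F$ of ${\bf A}$ that is a $\forall$-filter, i.e.\ $x\in F\Rightarrow\forall x\in F$ (compatibility with $\exists$ being automatic since $x\le\exists x$). I would then show that $F\mapsto F\cap\exists(A)$ is a lattice isomorphism from $\forall$-filters of ${\bf A}$ onto filters of $\exists(A)$, with inverse $G\mapsto\{x\in A:\forall x\in G\}$; the verifications that both maps are well defined, order preserving and mutually inverse use only (m2), (m3), (1) and the fact that $\forall$ fixes $\exists(A)$ pointwise. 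Composing the three isomorphisms $\mathrm{Con}({\bf A},\forall,\exists)\cong(\forall\text{-filters of }{\bf A})\cong(\text{filters of }\exists(A))\cong\mathrm{Con}(\exists(A))$ yields (4).

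Finally, (5) and (6) follow from (4) by transport. Since the congruence lattices are isomorphic, $({\bf A},\forall,\exists)$ is (finitely) subdirectly irreducible exactly when the G\"odel algebra $\exists(A)$ is. I would then invoke the standard description of (finitely) subdirectly irreducible G\"odel algebras: such an algebra is finitely subdirectly irreducible iff it is totally ordered, and subdirectly irreducible iff it is a chain possessing a penultimate element, that is, an element $u\ne 1$ with $a\le u$ for every $a\ne 1$. Reading these characterisations off along the isomorphism of (4) gives (5) and (6) verbatim.
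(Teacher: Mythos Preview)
The paper does not prove this lemma at all: it states the six items and refers to \cite{CC}, where they are established in the more general setting of monadic BL-algebras. Your proposal therefore goes well beyond what the paper provides, and the overall architecture---direct verification of (1)--(3), a filter/congruence correspondence for (4), and transport along (4) combined with the standard description of (finitely) subdirectly irreducible G\"odel algebras for (5)--(6)---is the right one and matches what is done in the cited source.

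The one soft spot is your treatment of the Frobenius-type identity $\exists(u\wedge y)=u\wedge\exists y$ for $u\in\exists(A)$, which you flag as potentially circular with $\Rightarrow$-closure and as the place where prelinearity and \eqref{G} must intervene. Neither worry is warranted: the identity follows from the monadic Heyting axioms alone. Since $u\wedge y\le u\wedge y$ gives $u\le y\Rightarrow(u\wedge y)$, and $u=\forall u$ by (1), monotonicity of $\forall$ yields $u\le\forall\bigl(y\Rightarrow(u\wedge y)\bigr)$; then (m5) gives $\forall\bigl(y\Rightarrow(u\wedge y)\bigr)\le\exists y\Rightarrow\exists(u\wedge y)$, whence $u\wedge\exists y\le\exists(u\wedge y)$. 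The reverse inequality is the easy one you already noted. With this in hand your argument for $\Rightarrow$-closure of $\exists(A)$ goes through cleanly, and prelinearity together with \eqref{G} are used only where you actually invoke them, namely in the characterisation of (finitely) subdirectly irreducible G\"odel algebras in (5)--(6).
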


The next lemma includes several arithmetical properties, some of which are used constantly throughout the paper.

\begin{lemma}\label{lemma2} Let $({\bf A},\forall,\exists)$ be a monadic G\"odel algebra. Then, for any $a,b\in A$ and $u\in \exists (A):$

\begin{enumerate}
    \item $\exists 1=1$ and $\forall 0=0;$
    \item $\forall u=u=\exists u;$
    \item $\forall a\leq a\leq \exists a;$
    \item if $a\leq b,$ then $\forall a\leq \forall b$ and $\exists a\leq \exists b;$
    \item $\forall(a\vee u)=\forall a\vee u;$
    \item $\exists(a\wedge u)=\exists a\wedge u;$
    \item $\forall(a\Rightarrow u)=\exists a\Rightarrow u;$
    \item $\exists (a\Rightarrow u)\leq \forall a\Rightarrow u;$
    \item $\forall (u\Rightarrow a)=u\Rightarrow \forall a;$
    \item $\exists(u\Rightarrow a)\leq u\Rightarrow \exists a;$
    \item $\exists a\wedge \forall b\leq \exists (a\wedge b);$
    \item $\forall (a\Rightarrow b)\leq  \forall a\Rightarrow \forall b;$
    \item $\forall \neg a=\neg \exists a;$
    \item $\exists \neg a\leq \neg \forall a.$
\end{enumerate}

\end{lemma}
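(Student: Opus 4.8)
The plan is to treat the preceding lemma as the main engine. Its item (3) says $\exists a=\min\{b\in\exists(A):b\geq a\}$ and $\forall a=\max\{b\in\exists(A):b\leq a\}$, which I would first restate as a Galois connection between ${\bf A}$ and the subalgebra $\exists(A)=\forall(A)$: for every $u\in\exists(A)$,
$$a\leq u\iff\exists a\leq u,\qquad u\leq a\iff u\leq\forall a.$$
Together with the fact that $\exists(A)$ is a subalgebra (hence closed under $\wedge,\vee,\Rightarrow,0,1$) and the defining identities (m1)--(m5) and their order duals, this reduces almost every item to a one-line verification. I expect the only place where the Gödel hypothesis is genuinely used to be item (5); all the remaining items are facts about arbitrary monadic Heyting algebras.

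The structural items come first. Item (3) is exactly (m1) and its dual; item (1) follows since $\exists1$ and $\forall0$ are squeezed between the bounds and $1,0$; item (2) is immediate from the min/max characterization, as $u\in\exists(A)$ is its own least upper and greatest lower approximant in $\exists(A)$; and item (4) follows from (m2) and its dual (if $a\leq b$ then $\forall a=\forall(a\wedge b)=\forall a\wedge\forall b$, and dually). Item (5) is a direct instance of the Gödel equation (\ref{G}) once $u$ is written as $\exists x$. For item (6), the inequality $\exists(a\wedge u)\leq\exists a\wedge u$ is monotonicity plus item (2), while the reverse inequality reduces, via Heyting residuation and the closure of $\exists(A)$ under $\Rightarrow$, to $\exists a\leq u\Rightarrow\exists(a\wedge u)$, which the Galois connection converts to $a\leq u\Rightarrow\exists(a\wedge u)$, a consequence of $a\wedge u\leq\exists(a\wedge u)$.

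The implication items are handled by the same two moves --- Heyting residuation and the Galois connection --- together with the Heyting identity (h2), $x\wedge(x\Rightarrow y)=x\wedge y$. For item (7), the inequality $\leq$ is precisely (m5) once $\exists u=u$ is used, while for $\geq$ the Galois connection reduces $\exists a\Rightarrow u\leq\forall(a\Rightarrow u)$ (note $\exists a\Rightarrow u\in\exists(A)$) to $\exists a\Rightarrow u\leq a\Rightarrow u$, which holds because $a\leq\exists a$. Item (8) is symmetric: since $\forall a\Rightarrow u\in\exists(A)$, the connection reduces it to $(a\Rightarrow u)\leq(\forall a\Rightarrow u)$, valid because $\forall a\leq a$. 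Items (9) and (10) are the $u\Rightarrow(-)$ analogues; for (9), $\geq$ reduces to $u\Rightarrow\forall a\leq u\Rightarrow a$, and $\leq$ follows from $\forall(u\Rightarrow a)\wedge u=\forall((u\Rightarrow a)\wedge u)=\forall(u\wedge a)\leq\forall a$ using (m2), (h2) and $\forall u=u$; item (10) is the dual computation with item (6) playing the role of (m2). Item (12) is the one-line $\forall(a\Rightarrow b)\wedge\forall a=\forall((a\Rightarrow b)\wedge a)=\forall(a\wedge b)\leq\forall b$. Finally items (13) and (14) are items (7) and (8) specialized to $u=0\in\exists(A)$, since $\neg a=a\Rightarrow0$.

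The one genuinely non-mechanical step I anticipate is item (11), $\exists a\wedge\forall b\leq\exists(a\wedge b)$. Here I would residuate twice: the claim is equivalent to $\exists a\leq\forall b\Rightarrow\exists(a\wedge b)$, and since $\forall b\Rightarrow\exists(a\wedge b)\in\exists(A)$ the Galois connection turns this into $a\leq\forall b\Rightarrow\exists(a\wedge b)$, i.e.\ $a\wedge\forall b\leq\exists(a\wedge b)$, which now follows from $\forall b\leq b$ and $a\wedge b\leq\exists(a\wedge b)$. The main care throughout is to check at each step that the element fed to the Galois connection really lies in $\exists(A)$, which is where the closure of $\exists(A)$ under the Heyting operations is used repeatedly.
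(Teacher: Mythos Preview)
Your proof is correct. Every item checks out: the Galois-connection reformulation of the min/max description from the preceding lemma is valid, the subalgebra closure of $\exists(A)$ is used correctly each time you pass an element through the connection, and the individual derivations (including the two-step residuation for item (11)) are sound.

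As for the comparison: the paper does not actually prove this lemma. It only states it and refers the reader to \cite{CC}, where the corresponding facts are established in the broader setting of monadic BL-algebras. So your write-up is a genuine, self-contained argument that the paper does not supply. Your organizing idea --- recasting $\exists$ and $\forall$ as a Galois connection onto the subalgebra $\exists(A)$ and then reducing each inequality to one inside either ${\bf A}$ or $\exists(A)$ --- is exactly the standard approach one finds in the monadic-BL literature, so it is in the spirit of the cited source, but here it is carried out explicitly and economically. Your remark that item (5) is the only place where the G\"odel equation (\ref{G}) is genuinely needed is also correct: the remaining items go through for any monadic Heyting algebra via the same Galois/subalgebra mechanism.
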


\section{Monadic Nelson algebras}

In this section, we will introduce the concept of monadic centered Nelson algebras and present some fundamental properties.

\begin{definition}\label{d3}
An algebra ${\bf U}=({\bf T},\exists)$ is a monadic Nelson algebra if ${\bf T}=\langle T,\vee,\wedge,\rightarrow,\sim,0,1\rangle$ is a Nelson algebra and the following conditions hold:
\begin{enumerate}
    \item[(n1)] $\exists 0=0,$
    \item[(n2)] $x\leq \exists x,$
    \item[(n3)] $\exists (x\wedge \exists y)=\exists x\wedge \exists y,$
    \item[(n4)] $\exists (x\vee y)=\exists x\vee \exists y,$
    \item[(n5)] $\forall \exists x=\exists x,$
    \item[(n6)]  $\forall (x\to y)\leq \exists x\to \exists y,$
    \item[(n7)] $\forall (x\to y)\leq \forall x\to \forall y$,  where $\forall a:=\sim \exists (\sim a).$
\end{enumerate}
Furthermore, if ${\bf T}$ is a prelinear Nelson algebra with center $c$, and $c \in \exists(T)$, we refer to ${\bf U}$ as a monadic centered prelinear Nelson algebra (or monadic $N_c$-algebra for brevity).\end{definition}

\begin{remark} Let ${\bf U}=({\bf T},\exists)$ be a monadic Nelson algebra. Then, from (n1) to (n5), we have that the reduct  $\langle T,\vee,\wedge,\sim,\exists,0,1\rangle$ is a monadic De Morgan algebra (see \cite{Chajda09}).
\end{remark}

The proof of the following algebraic properties are straightforward.

\begin{lemma}\label{propiedadesNelsonMonadicas} Let ${\bf U}=({\bf T},\exists)$ be a monadic Nelson algebra. Then, the following properties hold:

\begin{enumerate}
    \item $\forall 1=1,$
    \item $\forall x\leq x,$
    \item $\forall(x\vee \forall y)=\forall x\vee \forall y,$
    \item $\forall (x\wedge y)=\forall x\wedge \forall y,$
    \item $\exists \forall x=\forall x,$
    \item $\forall\exists x=\exists x.$
\end{enumerate}

\end{lemma}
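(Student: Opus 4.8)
The plan is to derive the six properties about $\forall$ directly from the definition $\forall a := {\sim}\,\exists({\sim}a)$ together with the axioms (n1)--(n5) governing $\exists$ and the De Morgan-algebra structure of the Kleene reduct. The key algebraic fact I will use repeatedly is that $\sim$ is an involution (${\sim}{\sim}x = x$) that is order-reversing and converts meets to joins and joins to meets, so each statement about $\forall$ is obtained by applying $\sim$ to the corresponding dual statement about $\exists$.

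First I would handle items $(1)$ and $(2)$ as the base cases. For $(1)$, $\forall 1 = {\sim}\,\exists({\sim}1) = {\sim}\,\exists 0 = {\sim}0 = 1$ using (n1). For $(2)$, from (n2) we have ${\sim}x \leq \exists({\sim}x)$; applying the order-reversing $\sim$ gives ${\sim}\,\exists({\sim}x) \leq {\sim}{\sim}x = x$, i.e. $\forall x \leq x$. Next, for $(4)$ I would dualize (n4): starting from $\exists({\sim}x \vee {\sim}y) = \exists({\sim}x) \vee \exists({\sim}y)$, rewrite the left side using ${\sim}x \vee {\sim}y = {\sim}(x \wedge y)$, then apply $\sim$ to both sides and use that $\sim$ turns the join on the right into a meet, yielding $\forall(x \wedge y) = \forall x \wedge \forall y$. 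The idempotency items $(5)$ and $(6)$ follow the same template from (n5): $\forall \exists x = \exists x$ is exactly (n5); for $\exists \forall x = \forall x$ I would compute $\exists \forall x = \exists({\sim}\exists({\sim}x))$ and apply (n5) in the form $\forall \exists y = \exists y$ after a $\sim$-translation, so that the double quantifier collapses.

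The one step that needs slightly more care is $(3)$, $\forall(x \vee \forall y) = \forall x \vee \forall y$, since it is the $\forall$-analogue of (n3) rather than a direct one-line dualization. I would first rewrite the left side through the definition: $\forall(x \vee \forall y) = {\sim}\,\exists\bigl({\sim}(x \vee \forall y)\bigr) = {\sim}\,\exists\bigl({\sim}x \wedge {\sim}\forall y\bigr)$. The crucial observation is that ${\sim}\forall y = {\sim}{\sim}\exists({\sim}y) = \exists({\sim}y)$, so the argument of $\exists$ becomes ${\sim}x \wedge \exists({\sim}y)$, which has exactly the shape $u \wedge \exists v$ required by (n3). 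Applying (n3) gives $\exists({\sim}x \wedge \exists({\sim}y)) = \exists({\sim}x) \wedge \exists({\sim}y)$, and then $\sim$ converts this meet into the join $\forall x \vee \forall y$, completing the identity.

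The main obstacle, and the reason (3) is the delicate case, is matching the expression ${\sim}(x \vee \forall y)$ to the precise syntactic form of (n3): one must recognize that ${\sim}\forall y$ simplifies to $\exists({\sim}y)$ via the involution so that the inner term is genuinely of the form ``(something) $\wedge\; \exists(\text{something})$'' and (n3) becomes applicable. Once that De Morgan bookkeeping is done correctly, every item reduces to a short computation, which is why the lemma is billed as straightforward.
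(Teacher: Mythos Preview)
Your proof plan is correct and carries out exactly the kind of routine De Morgan dualization the authors have in mind; the paper in fact omits the proof entirely, remarking only that ``the proof of the following algebraic properties are straightforward.'' Your handling of each item---in particular the observation that ${\sim}\forall y = \exists({\sim}y)$ so that (n3) applies to yield item~(3), and the $\sim$-translation of (n5) to obtain item~(5)---is precisely the computation one expects.
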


\section{Fidel--Vakarelov construction}

In this section, we prove some results that establish the connection between monadic G\"odel algebras and monadic $N_c$-algebras.

Let $(\mathbf{A},\forall,\exists)$ be a monadic Heyting algebra and let us consider

$$K(A):=\{(a,b)\in A\times A: a\wedge b=0\}.$$

It is well known from \cite{Fidel1,Vakarelov} that by defining: 

\begin{align*} 
(a,b)\vee (d,e) &:=  (a\vee d,b\wedge e) \\ 
(a,b)\wedge (d,e) &:=  (a\wedge d,b\vee e) \\
(a,b)\to (d,e) &:=(a\Rightarrow d,  a\wedge e)\\
\sim (a,b) &:=  (b,a) \\
0 &:=  (0,1) \\
 1 &:=  (1,0) \\
c &:=  (0,0)
\end{align*}

we get that the structure $\mathbf{A}_{K}=\langle K(A),\vee,\wedge,\sim, \to,c,0,1\rangle$ is a  centered Nelson algebra.

Now, we define on $K(A)$ the following unary operators: 
\begin{eqnarray}\label{Existe}
  \exists_K(a,b)  =  (\exists a,\forall b), \,\,\, \forall_K(a,b)  =  (\forall a,\exists b).
\end{eqnarray}

\begin{lemma}\label{l3} Let ${\bf G}=({\bf A},\forall,\exists)$ be a monadic G\"odel algebra and let $(a,b)\in K(A)$. Then, the following hold:

\begin{itemize}
\item[{\rm (a)}] $\exists_{K}(a,b)\in K(A),$
\item [{\rm (b)}] $\forall_{K}(a,b)=\sim \exists_K(\sim (a,b)),$
\item [{\rm (c)}] $\forall_{K}(a,b)\in K(A).$
\end{itemize}

\end{lemma}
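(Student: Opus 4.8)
The plan is to reduce all three parts to a single arithmetical fact recorded in Lemma~\ref{lemma2}(11), namely $\exists a\wedge\forall b\leq\exists(a\wedge b)$, together with a direct verification of the identity asserted in (b). The defining constraint for membership in $K(A)$ is an orthogonality condition, and the point is that property (11) is exactly what transports this orthogonality from the pair $(a,b)$ to its images under the quantifiers.

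For (a), I would begin from the hypothesis $(a,b)\in K(A)$, which by definition means $a\wedge b=0$. Applying Lemma~\ref{lemma2}(11) yields $\exists a\wedge\forall b\leq\exists(a\wedge b)=\exists 0=0$, where the final equality is the monadic Heyting axiom $\exists 0=0$. Hence $\exists a\wedge\forall b=0$, which is precisely the condition needed for $\exists_K(a,b)=(\exists a,\forall b)$ to lie in $K(A)$.

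Part (b) is a routine computation using only the definitions of $\sim$ and of the two quantifiers on $K(A)$. Unwinding the right-hand side, one has $\sim(a,b)=(b,a)$, then $\exists_K(b,a)=(\exists b,\forall a)$ by the definition in~\eqref{Existe}, and finally $\sim(\exists b,\forall a)=(\forall a,\exists b)$, which is exactly $\forall_K(a,b)$. No inequalities or arithmetic are required here.

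Finally, (c) follows immediately from (a) and (b): the operation $\sim$ visibly sends $K(A)$ into itself, since $a\wedge b=0$ is equivalent to $b\wedge a=0$, and $\exists_K$ does so by (a); the identity established in (b) then exhibits $\forall_K(a,b)$ as a composition of maps preserving $K(A)$. Equivalently, one can argue directly as in part (a), using $\forall a\wedge\exists b\leq\exists(a\wedge b)=0$ via Lemma~\ref{lemma2}(11). I do not anticipate any genuine obstacle; the only thing worth flagging is that the whole lemma rests on property (11), and one should be careful to invoke $\exists 0=0$ (an axiom of monadic Heyting algebras, also listed in Lemma~\ref{lemma2}) rather than the superficially similar $\forall 0=0$.
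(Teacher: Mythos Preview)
Your proof is correct and follows essentially the same route as the paper: the paper proves only (a), using Lemma~\ref{lemma2}(11) together with the axiom $\exists 0=0$ (labelled (m3)), exactly as you do, and leaves (b) and (c) implicit. One trivial bookkeeping slip: $\exists 0=0$ is axiom (m3) but is not among the items of Lemma~\ref{lemma2} (item~1 there records $\exists 1=1$ and $\forall 0=0$), so your parenthetical ``also listed in Lemma~\ref{lemma2}'' should be dropped.
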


\begin{proof} We will only prove (a). Let $(a,b) \in K(A)$. Therefore, $a \wedge b = 0$. Then, by using properties (m3) and 11 from Lemma \ref{lemma2}, we have $\exists a \wedge \forall b \leq \exists (a \wedge b) = \exists 0 = 0$. Consequently, $\exists_{K}(a,b) \in K(A)$.
\end{proof}

\begin{lemma}\label{l4} Let ${\bf G}=({\bf A},\forall,\exists)$ be a monadic G\"odel algebra. Then, $${\bf K}({\bf G})=({\bf A}_{\bf K},\exists_{K})$$ is a monadic $N_c$-algebra.
\end{lemma}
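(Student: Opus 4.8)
The plan is to verify directly that $(\mathbf{A}_{\mathbf{K}},\exists_K)$ satisfies every requirement in Definition \ref{d3}. Since the Fidel--Vakarelov construction already gives that $\mathbf{A}_{\mathbf{K}}$ is a centered Nelson algebra, and Lemma \ref{l3} guarantees that $\exists_K$ and $\forall_K$ are well-defined operations on $K(A)$ with $\forall_K={\sim}\exists_K({\sim}\,\cdot\,)$, it remains to establish three things: (I) that $\mathbf{A}_{\mathbf{K}}$ is prelinear; (II) that $\exists_K$ satisfies the seven monadic axioms (n1)--(n7); and (III) that the center condition $c\in\exists_K(K(A))$ holds. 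Throughout, the decisive bookkeeping device will be the description of the order on $K(A)$ read off from the join formula: $(a,b)\le(d,e)$ if and only if $a\le d$ and $e\le b$. This reduces every inequality between pairs to a conjunction of two inequalities in the underlying monadic G\"odel algebra, each of which I expect to match a clause of Lemma \ref{lemma2} or one of the monadic Heyting identities (m1)--(m5).

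For (I), I would compute $((a,b)\to(d,e))\vee((d,e)\to(a,b))$ using (\ref{implication}). The first coordinate becomes $(a\Rightarrow d)\vee(d\Rightarrow a)=1$ by prelinearity of $\mathbf{A}$, while the second coordinate is $a\wedge e\wedge d\wedge b$, which collapses to $0$ since $a\wedge b=0$ in $K(A)$. Hence the join is $(1,0)=1$, giving the prelinearity equation.

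For (II), each axiom splits coordinatewise. The lattice and order clauses (n1)--(n5) are routine reductions: for example (n2) becomes $a\le\exists a$ and $\forall b\le b$ (Lemma \ref{lemma2}(3)); (n3) reduces in the first coordinate to $\exists(a\wedge\exists d)=\exists a\wedge\exists d$ (Lemma \ref{lemma2}(6)) and in the second to $\forall(b\vee\forall e)=\forall b\vee\forall e$ (Lemma \ref{lemma2}(5)); (n4) uses that $\exists$ preserves joins and (m2); and (n5) uses $\forall\exists=\exists$ together with $\exists\forall=\forall$. The interesting clauses are (n6) and (n7). Writing $x=(a,b)$ and $y=(d,e)$, one gets $\forall_K(x\to y)=(\forall(a\Rightarrow d),\exists(a\wedge e))$; comparing this with $\exists_K x\to\exists_K y=(\exists a\Rightarrow\exists d,\,\exists a\wedge\forall e)$ and with $\forall_K x\to\forall_K y=(\forall a\Rightarrow\forall d,\,\forall a\wedge\exists e)$, the two required inequalities become $\forall(a\Rightarrow d)\le\exists a\Rightarrow\exists d$ and $\exists a\wedge\forall e\le\exists(a\wedge e)$ for (n6), and $\forall(a\Rightarrow d)\le\forall a\Rightarrow\forall d$ together with $\forall a\wedge\exists e\le\exists(a\wedge e)$ for (n7). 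These are precisely (m5), Lemma \ref{lemma2}(12), and two applications of Lemma \ref{lemma2}(11).

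For (III), I simply note $\exists_K c=\exists_K(0,0)=(\exists 0,\forall 0)=(0,0)=c$ by Lemma \ref{lemma2}(1), so the center lies in the image of $\exists_K$. The main obstacle I anticipate is keeping the direction of the order straight in (n6) and (n7): because $\sim$ swaps coordinates, an inequality between pairs demands $\le$ in the first coordinate but $\ge$ in the second. In particular, the second coordinate of (n7) must invoke Lemma \ref{lemma2}(11) in the correct orientation, reading $\forall a\wedge\exists e=\exists e\wedge\forall a\le\exists(e\wedge a)=\exists(a\wedge e)$, and similar care is needed throughout to match each reversed inequality to the right clause of Lemma \ref{lemma2}.
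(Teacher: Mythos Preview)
Your proposal is correct and follows essentially the same route as the paper: both verify prelinearity by direct computation of the join of the two implications, invoke Lemma~\ref{l3} for well-definedness, and check (n1)--(n7) coordinatewise by appealing to (m1)--(m5) and the corresponding items of Lemma~\ref{lemma2} (in particular items 5, 6, 11, 12). You are slightly more thorough in that you explicitly verify the center condition $c\in\exists_K(K(A))$ via $\exists_K(0,0)=(0,0)$, which the paper's proof leaves implicit.
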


\begin{proof} It is well known that ${\bf A}_{\bf K}$ is a centered Nelson algebra. Now, let's prove that $K(A)$ satisfies the axiom of prelinearity. Let $(a,b), (x,y)\in K(A)$. Then, $[(a,b) \to (x,y)] \vee [(x,y) \to (a,b)]=(a\Rightarrow x, a\wedge y)\vee (x\Rightarrow a, x\wedge b)=((a\Rightarrow x)\vee (x\Rightarrow a), a\wedge y\wedge x\wedge b)=(1,0)$. Referring to Lemma \ref{l3}, it becomes evident that the operator $\exists_K$ is well-defined. Now, we will proceed to prove the axioms {\rm (n1)} to {\rm (n6)} from Definition \ref{d3}.

\begin{itemize}
\item[{\rm (n1)}:] From (m3), we have that $\exists_{K}(0,1)=(\exists 0, \forall 1)=(0,1).$
\item [{\rm (n2)}:] From (m1), we obtain $(a,b)\wedge (\exists a, \forall b)=(a\wedge \exists a,b\vee \forall b)=(a,b)$. Therefore, $(a,b)\leq \exists_{K}(a,b)$.

\item [{\rm (n3)}:] From 5 and 6 of Lemma \ref{lemma2}, we have that $\exists_{K}[(a,b)\wedge \exists_K (x,y)]=\exists_{K}[(a,b)\wedge (\exists x,\forall y)]=\exists_{K}(a\wedge \exists x, b\vee \forall y)=(\exists (a\wedge \exists x),\forall(b\vee \forall y))=(\exists a\wedge \exists x, \forall b\vee \forall y)=(\exists a,\forall b)\wedge (\exists x, \forall y )=\exists_{K}(a,b)\wedge \exists_{K}(x,y).$

\item [{\rm (n4)}:] From (m2), we have that $\exists_{K}[(a,b)\vee (x,y)]=\exists_{K}(a\vee x,b\wedge y)=(\exists (a\vee x),\forall(b\wedge y))=(\exists a\vee \exists x,\forall b\wedge \forall y)=(\exists a,\forall b)\vee (\exists x,\forall y)=\exists_{K}(a,b)\vee \exists_{K}(x,y).$

\item [{\rm (n5)}:] From (m4), we deduce that $\forall_{K}\exists_{K}(a,b) = \forall_{K}(\exists a,\forall b) = (\forall\exists a,\exists\forall b) = (\exists a, \forall b) = \exists_{K}(a,b)$. Then, using (b) from Lemma \ref{l3}, we derive the desired identity.

\item[{\rm (n6)}] From (m5), we deduce that $\forall(a\Rightarrow x)\leq \exists a\Rightarrow \exists x$, and from item 11 in Lemma \ref{lemma2}, we conclude $\exists a\wedge \forall y\leq \exists (a\wedge y)$. Therefore, $(\forall(a\Rightarrow x),\exists(a\wedge y))\leq (\exists a\Rightarrow \exists x,\exists a\wedge \forall y)$. Thus, we can establish $\forall_{K}((a,b)\to (x,y))\leq \exists_{K}(a,b)\to \exists_{K}(x,y).$

\item[{\rm (n7)}] From Lemma \ref{lemma2}, we have that $\forall(a \Rightarrow x)\leq \forall a \Rightarrow \forall x$, and $\forall a \wedge \exists y\leq \exists(a\wedge y)$. This implies that $(\forall(a\Rightarrow x),\exists(a\wedge y))\leq (\forall a\Rightarrow\forall x,\forall a\wedge \exists y)$. Therefore, using \ref{G}, we can conclude that $\forall_K((a,b)\rightarrow(x,y))\leq \forall_K(a,b)\rightarrow\forall_ K (x,y)$.

\end{itemize}

\end{proof}

The following remark illustrates that equation \ref{G} cannot be omitted in Lemma \ref{l4}. This observation, in our view, justifies the study of the Fidel-Vakarelov construction in the case of monadic G\"odel algebras.

\begin{remark}
Note that monadic prelinear Heyting algebras may not satisfy equation \ref{G}. A
counterexample is given by the monadic Heyting algebra $(A,\exists,\forall)$ depicted in the Hasse diagram below
with the monadic operators defined as in the table.

\begin{center}
\hspace{0.25cm}
\put(00,00){\makebox(1,1){$\bullet$}}
\put(00,30){\makebox(1,1){$\bullet$}}
\put(-30,60){\makebox(1,1){$\bullet$}}
\put(30,60){\makebox(1,1){$\bullet$}}
\put(00,90){\makebox(1,1){$\bullet$}}
\put(00,00){\line(0,0){30}}
\put(00,30){\line(1,1){30}}
\put(00,30){\line(-1,1){30}}
\put(-30,60){\line(1,1){30}}
\put(30,60){\line(-1,1){30}}
\put(00,-10){\makebox(2,2){$ 0$}}
\put(10,30){\makebox(2,2){$ x$}}
\put(-40,60){\makebox(2,2){$ y$}}
\put(40,60){\makebox(2,2){$ z$}}
\put(00,100){\makebox(2,2){$ 1$}}
\end{center}
\begin{center}
\begin{tabular}{|c|c|c|c|c|c|}\hline
 $a$   & $0$ & $x$ & $y$ & $z$ & $1$ \\ \hline
$\exists a$ & $0$ & $z$ & $1$ & $z$ & $1$  \\ \hline
$\forall a$ & $0$ & $0$ & $0$ & $z$ & $1$  \\ \hline
\end{tabular}
\end{center}

Indeed, note that $\forall(y \vee \exists z) = \forall(y \vee z) = \forall 1 = 1$ whereas $\forall z \vee \exists z = 0 \vee z = z$.

\

Upon applying the previously described Fidel-Vakarelov construction, we arrive at the ensuing centered Nelson algebra: $$K(A)=\{ (0,0), (0,1), (1,0), (0,x), (x,0), (0,y), (y,0), (0,z), (z,0) \}.$$ This is elucidated in the subsequent Hasse diagram.

\begin{center}
\hspace{0.25cm}
\put(00,00){\makebox(1,1){$\bullet$}}
\put(-30,30){\makebox(1,1){$\bullet$}}
\put(30,30){\makebox(1,1){$\bullet$}}
\put(00,60){\makebox(1,1){$\bullet$}}
\put(00,90){\makebox(1,1){$\bullet$}}
\put(00,120){\makebox(1,1){$\bullet$}}
\put(-30,150){\makebox(1,1){$\bullet$}}
\put(30,150){\makebox(1,1){$\bullet$}}
\put(00,180){\makebox(1,1){$\bullet$}}
\put(00,00){\line(1,1){30}}
\put(00,00){\line(-1,1){30}}
\put(-30,30){\line(1,1){30}}
\put(30,30){\line(-1,1){30}}
\put(00,60){\line(0,0){30}}
\put(00,90){\line(0,0){30}}
\put(00,120){\line(1,1){30}}
\put(00,120){\line(-1,1){30}}
\put(-30,150){\line(1,1){30}}
\put(30,150){\line(-1,1){30}}
\put(00,-10){\makebox(2,2){$ (0,1)$}}
\put(-50,30){\makebox(2,2){$ (0,y)$}}
\put(50,30){\makebox(2,2){$ (0,z)$}}
\put(20,60){\makebox(2,2){$ (0,x)$}}
\put(20,90){\makebox(2,2){$(0,0)$}}
\put(20,120){\makebox(2,2){$ (x,0)$}}
\put(-50,150){\makebox(2,2){$ (y,0)$}}
\put(50,150){\makebox(2,2){$ (z,0)$}}
\put(00,190){\makebox(2,2){$ (1,0)$}}
\end{center}

Let's observe that if we consider the operator $\exists_{K}$ defined as in Equation \ref{Existe}, axiom (n3) from the Definition \ref{d3} is not satisfied. Indeed:
\[
\exists_K ((x,0)\wedge \exists_K (0,x))=(0,z)\neq (0,0)=\exists_K (x,0) \wedge\exists_K (0,x).
\]

Hence, $(K(A), \exists_K)$ is not a monadic De Morgan algebra, and consequently, it is not a monadic $N_{c}$-algebra.

\end{remark}

We write $\textbf{mG}$ for the category whose objects are monadic G\"odel algebras and \textbf{mN}$_{c}$ for the
category whose objects are monadic $N_{c}$-algebras. In both cases, the morphisms are the corresponding algebra homomorphisms. Moreover, if ${\bf G}=({\bf A},\forall,\exists)$ and ${\bf M}=(B,\forall,\exists)$ are monadic G\"odel algebras and $f:{\bf G}\longrightarrow {\bf M}$ is a morphism in \textbf{mG}, then it is no hard to see that the map $K(f):K(A)\longrightarrow K(B)$ given by $K(f)(x,y)=(f(x),f(y))$ is a morphism in {\bf mN}$_{c}$ from ${\bf K}({\bf G})=(\textbf{A}_\textbf{K},\exists_{K})$ to ${\bf K}({\bf M})=(\textbf{B}_\textbf{K},\exists_{K})$. It is clear that these assignments establish a functor ${\bf K}$ from {\bf mG} to {\bf mN}$_{c}.$

\begin{lemma}
Let ${\bf T}=\langle T,\wedge,\vee,\to,\sim,c,0,1\rangle$ be a centered prelineal Nelson algebra and define

$$C(T):=\{x\in T: x\geq c\}.$$

Then, the structure

$${\bf T}_{C}=\langle C(T),\wedge,\vee,\to,c,1\rangle$$ is a G\"odel algebra. Moreover, if $f:{\bf T}\longrightarrow {\bf S}$ is a homomorphism of centered Nelson algebras, then if follows that $C(f):C(T)\longrightarrow C(S),$ defined by $C(f)(x)=f(x),$ is a homomorphism of G\"odel algebras.

\end{lemma}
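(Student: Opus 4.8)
The plan is to build on what the excerpt has already established. The functor $C\colon \mathbf{KAc}\longrightarrow\mathbf{BDL}$ discussed earlier tells us that $\langle C(T),\vee,\wedge,c,1\rangle$ is already a bounded distributive lattice with bottom $c$ and top $1$, the meet, join and order being inherited from $T$; it also tells us that $C(f)$ is a morphism of bounded distributive lattices whenever $f$ is a morphism of centered Kleene algebras. Hence the only genuinely new content is (i) that $C(T)$ is closed under the Nelson implication $\to$ and that $\to$ restricted to $C(T)$ is the Heyting implication, (ii) that $\mathbf{T}_C$ satisfies prelinearity, and (iii) that $C(f)$ additionally preserves $\to$. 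Everything else is a straightforward restriction of facts recorded earlier in the paper.

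The heart of the argument is the identification of the Nelson implication with a Heyting implication on $C(T)$. First I would note that, since the De Morgan negation is order-reversing and $c=\sim c$, the hypothesis $x\geq c$ forces $\sim x\leq \sim c=c$; therefore, for $x,y\in C(T)$ we have $\sim x\leq c\leq y$, so that $\sim x\vee y=y$. Feeding this into the characterization \eqref{RN}, namely $x\wedge z\leq \sim x\vee y\Longleftrightarrow z\leq x\to y$, collapses it to the residuation law $x\wedge z\leq y\Longleftrightarrow z\leq x\to y$, valid for every $z\in T$ as soon as $x,y\geq c$. Instantiating $z=c$ and using $x\wedge c=c\leq y$ yields $c\leq x\to y$, so $C(T)$ is closed under $\to$. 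Since the meet and order of $C(T)$ agree with those of $T$, the same residuation law read inside $C(T)$ says precisely that $x\to y$ is the relative pseudocomplement of $x$ with respect to $y$. Thus $\langle C(T),\vee,\wedge,\to,c,1\rangle$ is a Heyting algebra whose implication is the restricted Nelson implication.

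Next I would verify prelinearity: because $\mathbf{T}$ is a prelinear Nelson algebra, the equation $(x\to y)\vee(y\to x)=1$ holds for all $x,y\in T$, in particular for $x,y\in C(T)$, which is exactly the prelinearity equation for $\mathbf{T}_C$ relative to its Heyting implication. Combined with the previous paragraph, this shows that $\mathbf{T}_C$ is a prelinear Heyting algebra, i.e.\ a G\"odel algebra.

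For the morphism part, I would first recall (as already observed in the excerpt for centered Kleene morphisms) that any homomorphism $f$ of centered Nelson algebras preserves the center: from $f(\sim c)=\sim f(c)$ and $c=\sim c$ one gets that $f(c)$ is a center of $S$, and centers are unique, so $f(c)=c$. Consequently $x\geq c$ implies $f(x)\geq f(c)=c$, so $C(f)(x)=f(x)$ lands in $C(S)$ and $C(f)$ is well defined. Since $f$ preserves $\vee,\wedge,1$ and the center, $C(f)$ preserves $\vee,\wedge,1,c$; and since $f$ preserves the Nelson implication, $f(x\to y)=f(x)\to f(y)$, which by step (i) is exactly preservation of the Heyting implication on the $C$-images. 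Hence $C(f)$ is a homomorphism of G\"odel algebras. The only delicate point in the whole argument is the reduction $\sim x\vee y=y$ on $C(T)$, which converts \eqref{RN} into Heyting residuation; once this is in place, the remaining verifications are routine restrictions of facts already established.
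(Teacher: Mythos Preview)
Your proof is correct and follows essentially the same route as the paper: both arguments use property \eqref{RN} together with the observation that for $x,y\in C(T)$ one has $\sim x\le c\le y$ (you package this as $\sim x\vee y=y$, the paper uses it piecewise in each direction of the residuation) to show that the Nelson implication restricted to $C(T)$ satisfies Heyting residuation, then inherit prelinearity from $T$ and treat $C(f)$ as routine. Your presentation is slightly more streamlined, but the underlying argument is the same.
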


\begin{proof} It is evident that $C(T)$ constitutes a bounded distributive lattice. Now, let us proceed to establish that it is a G\"odel algebra. Take $x$ and $y$ from $C(T)$. Given $x \geq c$, we have $\sim c = c \leq \sim x$. Extending this, as $y \geq c$, it follows that $c \leq \sim x \vee y$. This implies $x \wedge c \leq \sim x \vee y$. Consequently, referring to \ref{RN}, we conclude that $c \leq x \rightarrow y.$  Let \(x, y, z \in C(T)\). Assuming \(x \wedge y \leq z\), we can deduce \(x \wedge y \leq \sim y \vee z\), which implies by \ref{RN}, \(x \leq y \rightarrow z\). Conversely, if \(x \leq y \rightarrow z\), then \(x \wedge y \leq (y \rightarrow z) \wedge y = y \wedge (\sim y \vee z) \leq \sim y \vee z\). Furthermore, since \(\sim y \leq \sim c = c\) and \(c \leq z\), we conclude that \(x \wedge y \leq z\). Taking into account the above, since the axiom of prelinearity is satisfied in $T$, we conclude that $C(T)$ is a G\"odel algebra. Finally, showing that $C(f)$ is a homomorphism of G\"odel algebras is straightforward.

\end{proof}

\

\begin{lemma}\label{l5} Let ${\bf U}=({\bf T},\exists)$ be a monadic $N_c$-algebra. Then, $C({\bf U})=({\bf T}_{C},\forall,\exists)$ is a monadic G\"odel algebra. Moreover, if $f:({\bf T},\exists)\longrightarrow ({\bf S},\exists)$ is a morphism in {\bf mN}$_{c}$, then $C(f)$ es a morphism in {\bf mG}.    
\end{lemma}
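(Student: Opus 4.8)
The plan is to observe that the lattice-and-implication part of the claim is already settled by the preceding lemma, which gives that $\mathbf{T}_{C}=\langle C(T),\wedge,\vee,\to,c,1\rangle$ is a G\"odel algebra (in particular a prelinear Heyting algebra with bottom $c$, top $1$, and whose implication $\Rightarrow$ is the restriction of the Nelson $\to$). Thus it remains only to check that the operators $\exists$ and $\forall a:=\sim\exists(\sim a)$ of $\mathbf{U}$ restrict to $C(T)$ and that $(\mathbf{T}_{C},\forall,\exists)$ satisfies the monadic Heyting axioms together with the G\"odel equation \ref{G}. First I would verify well-definedness: if $x\geq c$ then $c\leq x\leq\exists x$ by (n2), so $\exists x\in C(T)$; and since $\forall$ is monotone (immediate from property 4 of Lemma \ref{propiedadesNelsonMonadicas}) and $\forall c=c$ (because $c\in\exists(T)$ and $\forall u=u$ for every $u\in\exists(T)$ by property 6), we get $\forall x\geq\forall c=c$, hence $\forall x\in C(T)$.

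Next I would note that almost every monadic Heyting axiom is an identity or inequality that already holds throughout $T$ and so, a fortiori, on $C(T)$: (m1) and its dual are property 2 of Lemma \ref{propiedadesNelsonMonadicas} and (n2); (m2) and its dual are property 4 and (n4); (m3) is property 1; (m4) and its dual are (n5) and property 5; and (m5) is exactly (n6), recalling that the Heyting implication of $C(T)$ is the restricted Nelson $\to$. The only axiom needing a small adjustment is the dual of (m3): since the bottom of $\mathbf{T}_{C}$ is $c$ rather than $0$, one must check $\exists c=c$, which follows from $c\in\exists(T)$ and the idempotency $\exists\exists=\exists$ (itself obtained from (n3) with $x:=1$, using $\exists 1=1$).

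The one genuinely new point is the G\"odel equation \ref{G}, namely $\forall(\exists x\vee y)=\exists x\vee\forall y$ for $x,y\in C(T)$. I would derive it directly from property 3 of Lemma \ref{propiedadesNelsonMonadicas}, $\forall(a\vee\forall b)=\forall a\vee\forall b$: taking $b=\exists x$ and using $\forall\exists x=\exists x$ (so that $\exists x$ is a $\forall$-fixed point) gives $\forall(a\vee\exists x)=\forall a\vee\exists x$, and setting $a=y$ yields exactly \ref{G} after commuting the join. I expect this to be the main obstacle, since it is the only step that genuinely uses the monadic-G\"odel structure rather than mere transfer of properties from $T$.

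Finally, for the functoriality part, the preceding lemma already gives that $C(f)(x)=f(x)$ is a G\"odel-algebra homomorphism $C(T)\longrightarrow C(S)$, so it remains only to see that it commutes with the monadic operators. Preservation of $\exists$ is immediate because $f$ is a morphism in $\mathbf{mN}_{c}$ and hence preserves $\exists$, and preservation of $\forall$ then follows from $\forall a=\sim\exists(\sim a)$ together with $f$ preserving both $\sim$ and $\exists$; all these computations take place in $T$ and $S$ and land in $C(S)$ precisely because $\forall$ and $\exists$ were shown above to map $C(\cdot)$ into itself.
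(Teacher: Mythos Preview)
Your proposal is correct and in fact considerably more complete than the paper's own proof, which only explicitly verifies that $C(T)$ is closed under $\exists$ and $\forall$ and explicitly leaves ``the remaining part of the proof to the reader.'' Your systematic check of the monadic Heyting axioms via Lemma \ref{propiedadesNelsonMonadicas} and (n1)--(n6), and especially your derivation of the G\"odel equation \ref{G} from property 3 of Lemma \ref{propiedadesNelsonMonadicas} together with $\forall\exists x=\exists x$, supply precisely the details the paper omits. The one small methodological difference is in the closure argument for $\forall$: the paper argues De Morgan-dually (from $x\geq c$ it gets $\sim x\leq c$, hence $\exists\sim x\leq\exists c=c$, hence $\forall x=\sim\exists\sim x\geq c$), whereas you use monotonicity of $\forall$ together with $\forall c=c$; both routes rely on $c\in\exists(T)$ and are equally direct.
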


\begin{proof} We will only verify that $C(T)$ is closed under $\exists$ and $\forall$. The remaining part of the proof is left to the reader. Let $x \in C(T)$. Then, $x \geq c$. Consequently, since $\exists$ is a monotone operator, we obtain $\exists x \geq \exists c = c$. Thus, $\exists x \in C(T)$. On the other hand, let's provide a proof that $\forall x \in C(T)$. Given that $x \geq c$, we have $\sim x \leq c$. Again, considering that $\exists$ is monotone, it follows that $\exists \sim x \leq \exists c = c$. Hence, $c \leq \sim \exists \sim x $. Therefore, $\forall x \in C(T)$.
\end{proof}

Let $ f : (T, \exists) \to (S, \exists) $ be a homomorphism of monadic $N_c$-algebras. It is clear now, from Lemma \ref{l5}, that the assignments $ T \mapsto C(T) $, $ f \mapsto C(f) $ determine a functor $ C$  from $ {\bf mN}_{c}$ to $ \bf{mG}$.

\begin{lemma} Let ${\bf G}=({\bf A},\forall,\exists)$ be a monadic G\"odel algebra. Then the map $\alpha: {\bf G}\longrightarrow C({\bf K}({\bf G}))$ given by $\alpha(x)=(x,0)$ is an isomorphism in ${\bf mG}$. \end{lemma}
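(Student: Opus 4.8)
The plan is to first make the codomain explicit, then exhibit $\alpha$ as a bijection onto it, and finally check that $\alpha$ respects every operation of a monadic G\"odel algebra. To begin, I would compute the partial order that the lattice operations induce on $K(A)$. From the definition of $\vee$ one reads off that $(a,b)\leq (d,e)$ holds exactly when $a\leq d$ and $e\leq b$; in particular $(a,b)\geq c=(0,0)$ forces $b\leq 0$, i.e.\ $b=0$. Conversely, every pair $(a,0)$ lies in $K(A)$ (since $a\wedge 0=0$) and dominates $c$. Hence
$$C(K(A))=\{(a,0): a\in A\},$$
and $\alpha(x)=(x,0)$ is a bijection from $A$ onto $C(K(A))$, its inverse being the first projection $(a,0)\mapsto a$.

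Next I would verify that $\alpha$ is a homomorphism, recalling that the operations of $C({\bf K}({\bf G}))$ are the restrictions to $C(K(A))$ of those of ${\bf A}_{\bf K}$, with bottom $c$, top $1$, and monadic operators $\exists_K,\forall_K$. Each clause reduces to a direct computation from the Fidel--Vakarelov definitions: for the lattice operations, $(x,0)\vee (y,0)=(x\vee y,0)$ and $(x,0)\wedge (y,0)=(x\wedge y,0)$; for the implication, $(x,0)\to (y,0)=(x\Rightarrow y,\,x\wedge 0)=(x\Rightarrow y,0)$; and for the constants, $\alpha(0)=(0,0)=c$ and $\alpha(1)=(1,0)=1$. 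Thus $\alpha$ preserves the whole G\"odel-algebra reduct.

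The only clauses needing more than the raw definitions are the quantifiers, which I would handle by invoking item~1 of Lemma~\ref{lemma2}. There, $\exists_K\alpha(x)=\exists_K(x,0)=(\exists x,\forall 0)=(\exists x,0)=\alpha(\exists x)$ because $\forall 0=0$, and symmetrically $\forall_K\alpha(x)=(\forall x,\exists 0)=(\forall x,0)=\alpha(\forall x)$ because $\exists 0=0$. Having shown that $\alpha$ is a bijective homomorphism, I conclude it is an isomorphism in ${\bf mG}$. I expect no genuine obstacle here: the argument is a routine verification, and the only two points demanding care are getting the direction of the order on the second coordinate right when identifying $C(K(A))$, and remembering that preservation of the quantifiers rests precisely on the identities $\forall 0=0$ and $\exists 0=0$.
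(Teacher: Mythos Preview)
Your proof is correct and follows essentially the same approach as the paper: the paper only spells out the verification that $\alpha$ commutes with $\exists_K$ and $\forall_K$ (relying on $\forall 0=0$ and $\exists 0=0$, exactly as you do), tacitly invoking the known non-monadic isomorphism for the remaining clauses that you have written out in full.
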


\begin{proof} We will only prove that $\alpha$ commutes with the unary operators $\exists$ and $\forall$.
Let $a \in A$. Then 
\begin{itemize}
\item $\alpha(\forall a) = (\forall a, 0) = (\forall a, \exists 0) = \forall_{K}((a, 0)) =\forall_{K}(\alpha(a))$.
\item $\alpha(\exists a) = (\exists a, 0) = (\exists a, \forall 0) = \exists_{K}((a, 0)) = \exists_{K}(\alpha(a))$.
\end{itemize}
    
\end{proof}

\begin{remark}
In \cite[Theorem 2.4]{Cignoli}, it was proved that there exists a categorical equivalence between the categories {\bf BDL} and the full subcategory of {\bf KA}$_{c}$, consisting of objects satisfying a topological condition known as the interpolation property. M. Sagastume \cite{Sagastume} subsequently noted that the interpolation property is equivalent to the algebraic condition (CK). Furthermore, Cignoli in \cite{Cignoli} established that every Nelson algebra $T$ satisfies the interpolation property. Thus, based on the aforementioned findings, it can be concluded that the (CK) property holds in all Nelson algebras. This implies that $\beta:T\longrightarrow K(C(T))$ defined by $\beta(x)=(x\vee c, \sim x\vee c)$ is an isomorphism of Nelson algebras.    
\end{remark}

Next, we establish that the previously mentioned result can be extended to the monadic context.

\begin{lemma} Let $U=(T,\exists)$ be a monadic $N_{c}$-algebra. Then the map $\beta$ is a isomorphism in {\bf mN}$_{c}$.    
\end{lemma}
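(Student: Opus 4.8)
The plan is to leverage everything already established about $\beta$ and reduce the claim to a single compatibility condition with the quantifiers. By the preceding remark, since (CK) holds in every Nelson algebra, the map $\beta\colon T\to K(C(T))$, $\beta(x)=(x\vee c,\sim x\vee c)$, is already a bijective homomorphism of centered prelinear Nelson algebras. Because a bijective homomorphism between algebras of the same similarity type is automatically an isomorphism in the corresponding variety, it suffices to upgrade $\beta$ to a morphism in $\mathbf{mN}_{c}$; that is, I only need to check that $\beta$ commutes with $\exists$ (compatibility with $\forall$ then follows, since $\forall a=\sim\exists\sim a$ and $\beta$ already preserves $\sim$). Here the target carries the operator $\exists_{K}$ of ${\bf K}(C({\bf U}))$ given by \ref{Existe}, namely $\exists_{K}(a,b)=(\exists a,\forall b)$ computed in the monadic G\"odel algebra $C({\bf U})=({\bf T}_{C},\forall,\exists)$ of Lemma \ref{l5}.

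First I would record that the center is a fixed point of both quantifiers. Since $c\in\exists(T)$ by the definition of a monadic $N_{c}$-algebra, we may write $c=\exists a$; then $\exists c=\exists\exists a=\exists a=c$ (using idempotency of $\exists$, which follows by combining (n5) with item 5 of Lemma \ref{propiedadesNelsonMonadicas}), and $\forall c=\forall\exists a=\exists a=c$ by (n5). Thus $\exists c=\forall c=c$.

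The heart of the argument is then the computation $\beta(\exists x)=\exists_{K}(\beta(x))$, which splits into the two coordinates. On the first coordinate, using (n4) and $\exists c=c$,
\[
\exists(x\vee c)=\exists x\vee\exists c=\exists x\vee c,
\]
matching the first component of $\beta(\exists x)=(\exists x\vee c,\sim\exists x\vee c)$. On the second coordinate I would use $\forall c=c$ together with item 3 of Lemma \ref{propiedadesNelsonMonadicas} and the involutivity of $\sim$:
\[
\forall(\sim x\vee c)=\forall(\sim x\vee\forall c)=\forall\sim x\vee\forall c=\sim\exists\sim\sim x\vee c=\sim\exists x\vee c,
\]
which is exactly the second component. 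Hence $\exists_{K}(\beta(x))=(\exists x\vee c,\sim\exists x\vee c)=\beta(\exists x)$.

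The computations are routine once the ingredients are in place, so the only genuine subtlety, and the step I would treat most carefully, is the second coordinate: because $\exists_{K}$ acts as $\forall$ on the right-hand component, preservation of the single operator $\exists$ on $T$ must be reconciled with a $\forall$-identity, and this is precisely where one needs $c$ to be a quantifier fixed point (hence the hypothesis $c\in\exists(T)$) together with the dual law $\forall(a\vee\forall b)=\forall a\vee\forall b$. Having verified compatibility with $\exists$, the map $\beta$ is a bijective $\mathbf{mN}_{c}$-homomorphism and therefore an isomorphism in $\mathbf{mN}_{c}$, which completes the proof.
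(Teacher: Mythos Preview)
Your proof is correct and follows essentially the same approach as the paper: reduce to showing that $\beta$ commutes with $\exists$, then verify the two coordinates using $\exists(x\vee c)=\exists x\vee c$ and $\forall(\sim x\vee c)=\forall(\sim x)\vee c$. If anything, your version is more careful than the paper's, since you explicitly justify $\exists c=\forall c=c$ from the hypothesis $c\in\exists(T)$ and invoke the Nelson-algebra properties (n4) and item~3 of Lemma~\ref{propiedadesNelsonMonadicas}, whereas the paper's brief computation cites (m2) and item~6 of Lemma~\ref{lemma2} somewhat loosely.
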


\begin{proof} We will only prove that $\beta$ commutes with the unary operator $\exists$. Let $a \in A$. By using properties (m2) and 6 of Lemma \ref{lemma2}, we can conclude that
\[
\begin{array}{lll}
\beta(\exists a) & = & (\exists a \vee c, (\sim \exists a) \vee c) \\
                         & = & (\exists a \vee c, \forall (\sim  a) \vee c) \\
                         & = & (\exists (a \vee c), \forall (\sim a \vee c)), \\
                        
\end{array}
\]
i.e., $\beta(\exists a) = \exists_{K}(\beta(a))$.

\end{proof}

Straightforward computations based on previous results of this section prove the following result.

\begin{theorem} \label{equivalence theorem}
The functors K and C establish a categorical equivalence between ${\bf mG}$ and {\bf mN}$_{c}$ with natural isomorphisms $\alpha$ and $\beta$.
\end{theorem}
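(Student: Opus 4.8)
The plan is to reduce the statement to the standard categorical criterion for an equivalence: it suffices to exhibit functors $K\colon {\bf mG}\to {\bf mN}_{c}$ and $C\colon {\bf mN}_{c}\to {\bf mG}$ together with natural isomorphisms $\mathrm{Id}_{\bf mG}\cong C\circ K$ and $\mathrm{Id}_{{\bf mN}_{c}}\cong K\circ C$. The functoriality of $K$ and $C$ has already been established, and the two preceding lemmas show that for each object the maps $\alpha_{\bf G}\colon {\bf G}\to C(K({\bf G}))$, $\alpha_{\bf G}(x)=(x,0)$, and $\beta_{\bf U}\colon {\bf U}\to K(C({\bf U}))$, $\beta_{\bf U}(x)=(x\vee c,\sim x\vee c)$, are isomorphisms in the respective categories. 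Hence the only point left to check is that the families $\{\alpha_{\bf G}\}$ and $\{\beta_{\bf U}\}$ are \emph{natural}, i.e.\ that the relevant squares commute for every morphism.

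For $\alpha$, given a morphism $f\colon {\bf G}\to {\bf M}$ in ${\bf mG}$, I would verify the identity $C(K(f))\circ \alpha_{\bf G}=\alpha_{\bf M}\circ f$ by tracing a single element: on the one hand $C(K(f))(x,0)=(f(x),f(0))=(f(x),0)$, and on the other $\alpha_{\bf M}(f(x))=(f(x),0)$, the two agreeing because $f$ preserves $0$. For $\beta$, given $g\colon {\bf U}\to {\bf V}$ in ${\bf mN}_{c}$, I would check $K(C(g))\circ \beta_{\bf U}=\beta_{\bf V}\circ g$; here $K(C(g))(x\vee c,\sim x\vee c)=(g(x\vee c),g(\sim x\vee c))=(g(x)\vee c,\sim g(x)\vee c)$, using that $g$ is a homomorphism preserving $\vee$, $\sim$, and the center $c$, which is exactly $\beta_{\bf V}(g(x))$.

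Once naturality is in place, and since each component $\alpha_{\bf G}$ and $\beta_{\bf U}$ is already known to be an isomorphism, the transformations $\alpha$ and $\beta$ are natural isomorphisms, and the equivalence follows immediately. I do not expect any serious obstacle here: the substantive content --- that $K$ actually lands in ${\bf mN}_{c}$, that $C$ lands in ${\bf mG}$, and that $\alpha,\beta$ are quantifier-commuting isomorphisms --- has been discharged in the earlier lemmas, so the remaining naturality verifications reduce to the preservation of $0$, $\sim$, and $c$ by morphisms and are entirely routine.
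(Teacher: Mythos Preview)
Your proposal is correct and follows exactly the line the paper intends: the paper's own proof is the single sentence ``Straightforward computations based on previous results of this section prove the following result,'' and what you have done is spell out precisely those straightforward computations (naturality of $\alpha$ and $\beta$), relying on the earlier lemmas for functoriality and for the fact that each component is an isomorphism.
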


\section{Congruences}

Write $Con({\bf A})$ for the lattice of congruences of an algebra ${\bf A}$. Let {\bf L} be a bounded distributive lattice. If $\theta\in Con({\bf L}),$ we can define a congruence $\gamma_{\theta}$ of $K({\bf L})$ by 

\begin{center}
$(a,b)\gamma_{\theta}(x,y)$ if and only if $(a,x)\in \theta$ and $(b,y)\in \theta.$    
\end{center}

Reciprocally, if $\gamma\in Con(K({\bf L})),$ we can also define a congruence $\theta^{\gamma}$ of ${\bf L}$ as

\begin{center}
$(a,b)\in \theta^{\gamma}$ if and only if $(a,0)\gamma (b,0).$
\end{center}

In Lemma 5.3 of \cite{Castiglioni} it was proved that the assignments $\theta\mapsto \gamma_\theta$ and $\gamma\mapsto \theta^{\gamma}$ establish an order isomorphism between $Con({\bf L})$ and $Con(K({\bf L})).$

\

The following two results prove that the latter assignment can also be extended to monadic G\"odel algebras and monadic $N_{c}$-algebras.

\begin{lemma} Let ${\bf G}=({\bf A},\forall,\exists)$ be a monadic G\"odel algebra and let $\theta\in Con({\bf G})$ and $\gamma\in Con(K({\bf G}))$. Then, $\gamma_\theta\in Con(K({\bf G}))$ and $\theta^{\gamma}\in Con({\bf G}).$    
\end{lemma}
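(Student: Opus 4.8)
The plan is to verify that both $\gamma_\theta$ and $\theta^\gamma$ are compatible with all the fundamental operations of $K(\mathbf{G})$ and of $\mathbf{G}$, respectively. Applying Lemma 5.3 of \cite{Castiglioni} to the bounded distributive lattice reduct $\mathbf{L}$ of $\mathbf{A}$ (and restricting $\gamma$ to the centered Kleene reduct of $K(\mathbf{G})$, of which it is in particular a congruence), we already obtain that $\gamma_\theta$ and $\theta^\gamma$ are mutually inverse congruences of the respective \emph{reducts}; in particular they are equivalence relations respecting $\vee$, $\wedge$, $\sim$ and the constants. Thus the whole substance of the argument is confined to the Heyting implication and the two quantifiers, and the verifications are coordinatewise computations exploiting that every operation of $K(\mathbf{G})$ acts on pairs through the operations of $\mathbf{A}$.

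First I would show $\gamma_\theta\in Con(K(\mathbf{G}))$. Suppose $(a,b)\,\gamma_\theta\,(a',b')$ and $(d,e)\,\gamma_\theta\,(d',e')$, so that the four pairs $(a,a'),(b,b'),(d,d'),(e,e')$ all lie in $\theta$. Since $\theta$ is a congruence of the monadic G\"odel algebra $\mathbf{G}$, it is compatible with $\Rightarrow$, $\exists$ and $\forall$, and hence applying these operations coordinatewise preserves $\theta$-relatedness. Concretely, from $(a,b)\to(d,e)=(a\Rightarrow d,\,a\wedge e)$ we obtain $(a\Rightarrow d,\,a'\Rightarrow d')\in\theta$ and $(a\wedge e,\,a'\wedge e')\in\theta$, so the two weak implications are $\gamma_\theta$-related; and from $\exists_K(a,b)=(\exists a,\forall b)$ we obtain $(\exists a,\exists a')\in\theta$ and $(\forall b,\forall b')\in\theta$, so the two values of $\exists_K$ are $\gamma_\theta$-related. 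Compatibility with $\forall_K$ then follows at once, since $\forall_K(a,b)=\sim\exists_K(\sim(a,b))$ by Lemma \ref{l3}(b) and compatibility with $\sim$ and $\exists_K$ has already been established.

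Next I would show $\theta^\gamma\in Con(\mathbf{G})$. The key observation is that the operations of $K(\mathbf{G})$, when applied to central pairs of the form $(a,0)$, return pairs of the same shape: indeed $(a,0)\vee(b,0)=(a\vee b,0)$, $(a,0)\wedge(b,0)=(a\wedge b,0)$, $(a,0)\to(b,0)=(a\Rightarrow b,\,a\wedge 0)=(a\Rightarrow b,0)$, and, crucially, $\exists_K(a,0)=(\exists a,\forall 0)=(\exists a,0)$ together with $\forall_K(a,0)=(\forall a,\exists 0)=(\forall a,0)$, the last two equalities using $\forall 0=0$ (Lemma \ref{lemma2}(1)) and $\exists 0=0$ (a defining identity of monadic Heyting algebras). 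Now if $(a,a')\in\theta^\gamma$ and $(b,b')\in\theta^\gamma$, i.e.\ $(a,0)\,\gamma\,(a',0)$ and $(b,0)\,\gamma\,(b',0)$, then applying any fundamental operation of $K(\mathbf{G})$ to these $\gamma$-related arguments and reading off the displayed identities gives, for example, $(\exists a,0)\,\gamma\,(\exists a',0)$, which is exactly $(\exists a,\exists a')\in\theta^\gamma$; the same pattern disposes of $\vee$, $\wedge$, $\Rightarrow$ and $\forall$.

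The computation is essentially routine once these reductions are in place; the step I would regard as the crux is the compatibility of $\theta^\gamma$ with the quantifiers, since it relies precisely on the identities $\forall 0=0$ and $\exists 0=0$ to guarantee that $\exists_K$ and $\forall_K$ send central pairs $(a,0)$ to central pairs. Were this to fail, $\exists_K(a,0)$ would not have second coordinate $0$, and the passage between $\gamma$-relatedness and $\theta^\gamma$-relatedness would break down.
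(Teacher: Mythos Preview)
Your proposal is correct and follows essentially the same approach as the paper: both reduce to checking compatibility with $\to$ and $\exists_K$ for $\gamma_\theta$, and with $\Rightarrow$, $\exists$, $\forall$ for $\theta^\gamma$, via the coordinatewise identities $(a,0)\to(b,0)=(a\Rightarrow b,0)$, $\exists_K(a,0)=(\exists a,0)$, $\forall_K(a,0)=(\forall a,0)$. Your write-up is in fact slightly more explicit than the paper's in invoking Lemma~5.3 of \cite{Castiglioni} for the lattice/Kleene part and in pinpointing where $\forall 0=0$ and $\exists 0=0$ are used.
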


\begin{proof} Let $(a,b)\gamma_\theta (x,y)$ and $(v,w)\gamma_\theta(t,u),$ meaning that $(a,x)\in \theta,$ $(b,y)\in \theta,$ $(v,t)\in \theta,$ and $(w,u)\in \theta$. Consequently, we have $(a\Rightarrow v,x\Rightarrow t)\in \theta$ and $(a\wedge w,x\wedge u)\in \theta$. By the definition of $\gamma_\theta$, it follows that $(a\Rightarrow v,a\wedge w)\gamma_\theta(x\Rightarrow t, x\wedge u)$. Hence, $(a,b)\to (v,w)\gamma_\theta (x,y)\to (t,u).$

On the other hand, if $(a,b)\gamma_\theta (x,y)$, which implies that $(a,x)\in \theta$ and $(b,y)\in \theta$, then, because $\theta$ is compatible with $\exists$ and $\forall$, we can conclude that $(\exists a,\exists x)\in \theta$ and $(\forall b,\forall y)\in \theta$. This leads to $(\exists a,\forall b)\gamma_\theta (\exists x,\forall y)$. So, $\exists_{K}(a,b)\gamma_\theta \exists_{K}(x,y).$  Therefore, $\gamma_\theta\in Con(K({\bf G}))$.  

Finally, let's prove that $\theta^{\gamma}$ belongs to $Con({\bf G})$. Take $(a,b)\in \theta^{\gamma}$ and $(x,y)\in \theta^{\gamma}$. This implies $(a,0)\gamma (b,0)$ and $(x,0)\gamma (y,0)$. Consequently, we have $(a,0)\to (x,0)\gamma (b,0)\to (y,0)$. However, $(a,0)\to (x,0)=(a\Rightarrow x,0)$ and $(b,0)\to (y,0)=(b\Rightarrow y,0)$. Thus, we can conclude that $(a\Rightarrow x, b\Rightarrow y)\in \theta^{\gamma}$.

Now, let's demonstrate that $\theta^{\gamma}$ is compatible with both $\exists$ and $\forall$. Assume $(a,b)\in \theta^{\gamma}$. This means $(a,0)\gamma (b,0)$. Given that $\gamma$ is compatible with $\exists_K$, we can deduce $(\exists a,0)\gamma (\exists b,0)$. Hence, we have $(\exists a,\exists b)\in \theta^{\gamma}$. Additionally, since $\gamma$ is compatible with $\sim$ and $\exists_K$, we know that $\forall_{K}(a,0)\gamma \forall_{K}(b,0)$, which leads to $(\forall a,0)\gamma (\forall b,0)$. In other words, $(\forall a,\forall b)\in \theta^{\gamma}$.

\end{proof}

\begin{theorem} Let ${\bf G}=({\bf A},\forall,\exists)$ be a monadic G\"odel algebra. Then, the mapping $f:Con({\bf G})\longrightarrow Con(K({\bf G}))$, defined as $f(\theta)=\gamma_\theta$, establishes an order isomorphism.    
\end{theorem}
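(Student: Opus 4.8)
The plan is to reduce everything to the lattice-level result, Lemma 5.3 of \cite{Castiglioni}, which already establishes that $\theta\mapsto\gamma_\theta$ and $\gamma\mapsto\theta^\gamma$ are mutually inverse order isomorphisms between $Con(\mathbf{A})$ and $Con(K(\mathbf{A}))$ taken over the bounded distributive (respectively centered Kleene) reducts. The only genuinely new ingredient is that these assignments also respect the monadic enrichment, and this is exactly what the preceding lemma provides. So the strategy is: quote well-definedness from the preceding lemma, quote the two-sided inverse identities from \cite{Castiglioni}, and observe that monadic congruences sit inside the ambient lattice congruence lattices so that those identities restrict.

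First I would check that $f$ is well defined and monotone. Well-definedness is precisely the preceding lemma: if $\theta\in Con(\mathbf{G})$ then $\gamma_\theta\in Con(K(\mathbf{G}))$. Monotonicity is immediate, since $(a,b)\,\gamma_\theta\,(x,y)$ depends on $\theta$ only through $(a,x)\in\theta$ and $(b,y)\in\theta$, so $\theta_1\subseteq\theta_2$ forces $\gamma_{\theta_1}\subseteq\gamma_{\theta_2}$. Next I would produce the candidate inverse. Again by the preceding lemma, $\gamma\mapsto\theta^\gamma$ maps $Con(K(\mathbf{G}))$ into $Con(\mathbf{G})$, and it too is monotone: $(a,b)\in\theta^\gamma$ means $(a,0)\,\gamma\,(b,0)$, whence $\gamma_1\subseteq\gamma_2$ gives $\theta^{\gamma_1}\subseteq\theta^{\gamma_2}$.

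The crux is then the mutual-inverse relations $\theta^{\gamma_\theta}=\theta$ and $\gamma_{\theta^\gamma}=\gamma$. Here I would note that every $\theta\in Con(\mathbf{G})$ is in particular a congruence of the bounded distributive reduct of $\mathbf{A}$ (it is compatible with $\wedge,\vee,0,1$), and every $\gamma\in Con(K(\mathbf{G}))$ is in particular a centered Kleene congruence of $K(\mathbf{A})$ (it is compatible with $\wedge,\vee,\sim,c,0,1$). Consequently the maps appearing in the statement are literally the restrictions of the maps of Lemma 5.3 of \cite{Castiglioni}, where the identities $\theta^{\gamma_\theta}=\theta$ and $\gamma_{\theta^\gamma}=\gamma$ are proved for all lattice, respectively Kleene, congruences. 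These identities therefore persist verbatim on the subclasses $Con(\mathbf{G})$ and $Con(K(\mathbf{G}))$, so $f$ is a bijection whose inverse is $\gamma\mapsto\theta^\gamma$.

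Putting the pieces together, $f$ is a monotone bijection whose inverse is also monotone, hence an order isomorphism. I do not expect a serious obstacle: the surjectivity, injectivity and reflection of order are inherited directly from \cite{Castiglioni}, and the preceding lemma supplies the compatibility with $\exists$ and $\forall$ in both directions. The one point requiring care is the observation that monadic Gödel congruences and monadic $N_c$-congruences are, respectively, genuine lattice and Kleene congruences of the relevant reducts, so that the inverse identities of \cite{Castiglioni} may be invoked rather than reproved; once this is made explicit, the proof reduces to bookkeeping.
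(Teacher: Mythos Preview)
Your proposal is correct and follows essentially the same approach as the paper, which in fact states this theorem without proof immediately after the preceding lemma, implicitly relying on Lemma~5.3 of \cite{Castiglioni} for the mutual-inverse identities together with the preceding lemma for compatibility with the monadic operators. Your explicit observation that $Con(\mathbf{G})$ and $Con(K(\mathbf{G}))$ embed into the respective lattice and Kleene congruence lattices, so that the identities $\theta^{\gamma_\theta}=\theta$ and $\gamma_{\theta^\gamma}=\gamma$ restrict, is exactly the justification the paper leaves tacit.
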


\section*{Conclusion and Opens Problems}

In this paper, we prove that the Fidel-Vakarelov construction cannot be extended to the context of monadic Heyting algebras and monadic centered Nelson algebras. However, for the case of monadic G\"odel algebras, we have successfully provided a construction in the style of Fidel-Vakarelov. Furthermore, we establish the existence of a categorical equivalence between the category of monadic G\"odel algebras (or monadic prelinear Heyting algebras) and the category of monadic $N_{c}$-algebras. This construction can be further generalized by removing the prelinearity condition in both classes of algebras. In other words, it can be shown that the category of monadic Heyting algebras satisfying the axiom \ref{G} is equivalent to the category of monadic centered Nelson algebras.

Moreover, the well-known Sendlewski construction (refer to \cite{Sendlewski}) enables us to establish that the category of Nelson algebras is equivalent to the category formed by pairs $({\bf A}, F)$, where ${\bf A}$ is a Heyting algebra and $F$ is a Boolean filter of $A$. An open problem would be extending the Sendlewski construction to the monadic case. To achieve this, an appropriate notion of a monadic Boolean filter should be defined. Categorical equivalences of this type have been established in \cite{F1,F2} for Nelson algebras (or Nelson lattices) equipped with a consistency operator.

\section*{Author Contributions}

All authors contributed to this article.

\section*{Funding}

The work is partially supported by CONICET.

\section*{Data Availability}

This article does not use any particular data, or human participant. Indeed, the results obtained have been established from the articles cited in the references.

\section*{Conflict of interest}

The authors have no conflicts of interest to declare that are relevant to the content of this article.

\section*{Human Participants and/or Animals}

Not applicable.

\section*{Ethical approval}

We declare that we have complied with the ethical standards for publishing articles in this journal.

\begin{acknowledgements}

We extend our gratitude to the editors and reviewers for their valuable contributions to this article. Their insightful feedback and dedicated efforts have greatly improved the manuscript.

\end{acknowledgements}

%
%



\end{document}